\newtheorem{theorem}{Theorem}[section] 
\newtheorem{corollary}[theorem]{Corollary}
\newtheorem{proposition}[theorem]{Proposition}
\theoremstyle{definition}
\newtheorem{definition}[theorem]{Definition}
\newtheorem{example}[theorem]{Example}
\newtheorem{remark}[theorem]{Remark}
\numberwithin{equation}{section}
\begin{document}

\title[Square integrability of regular representations]{Square integrability of regular representations on reductive homogeneous spaces}

\author{Kazushi Maeda}
\address[K.Maeda]{Graduate School of Mathematical Sciences, The University of Tokyo, 3-8-1 Komaba, Meguro, 153-8914 Tokyo, Japan}
\email{kmaeda@ms.u-tokyo.ac.jp}

\author{Yoshiki Oshima}
\address[Y.Oshima]{Graduate School of Mathematical Sciences, The University of Tokyo, 3-8-1 Komaba, Meguro, 153-8914 Tokyo, Japan}
\email{yoshima@ms.u-tokyo.ac.jp}

\subjclass[2020]{22E46}

\keywords{Lie groups, reductive groups, unitary representations, homogeneous spaces, harmonic analysis, discrete series}

\begin{abstract}
Let $G$ be a real reductive Lie group and $H$ a reductive subgroup of $G$. 
Benoist-Kobayashi studied when $L^2(G/H)$ is a tempered representation of $G$
 and in particular they gave a necessary and sufficient condition for the temperedness
 in terms of certain functions on Lie algebras.
In this paper, we consider when $L^2(G/H)$
 is equivalent to a unitary subrepresentation of $L^2(G)$
 and we will give a sufficient condition for this
 in terms of functions introduced by Benoist-Kobayashi.
As a corollary, we prove the non-existence of discrete series for homogeneous spaces $G/H$
 satisfying certain conditions. 
\end{abstract}

\maketitle

\section{Introduction}
Let $G$ be a real algebraic reductive Lie group and $H$ an algebraic reductive subgroup.
The quotient space $G/H$ has a $G$-invariant measure and 
 we have a unitary representation $L^2(G/H)$ of $G$,
 which is called the regular representation on $G/H$.

When $G/H$ is a symmetric space, 
 a great deal of studies has been done on the irreducible decomposition of $L^2(G/H)$
 and Plancherel formulas are known in this case.
On the other hand, such formulas are widely open for general homogeneous spaces $G/H$.

In a series of papers \cite{BK, BKII, BKIII, BKIV}, Benoist and Kobayashi studied
 the temperedness of $L^2(G/H)$. 
In the first paper \cite{BK},
 they gave a criterion on a pair of real reductive groups $(G, H)$ 
 for $L^2(G/H)$ to be tempered.
Let $\mathfrak{g}$ and $\mathfrak{h}$ be Lie algebras of $G$ and $H$,
 respectively.
For an $\mathfrak{h}$-module $V$, 
 they defined a non-negative valued piecewise linear function $\rho_V$
 on a maximal split abelian subspace $\mathfrak{a}$ of $\mathfrak{h}$.
Then they proved that $L^2(G/H)$ is tempered if and only if
 the inequality $\rho_{\mathfrak{g}}\leq 2\rho_{\mathfrak{g}/\mathfrak{h}}$
 holds on $\mathfrak{a}$.
A key ingredient of their proof is the estimate
 of the volume of $gC\cap C$ for a compact set $C\subset G/H$
 when $g\in G$ goes to infinity.

In this paper, we consider when $L^2(G/H)$ is equivalent to 
 a unitary subrepresentation of the left regular representation $L^2(G)$.
We write $L^2(G/H)\subset L^2(G)$ for this property.
This condition is stronger than the temperedness of $L^2(G/H)$.
Our main result (Theorem~\ref{thm:SqintRho}) is
 $L^2(G/H)\subset L^2(G)$
 if the strictly inequality $\rho_{\mathfrak{g}}< 2\rho_{\mathfrak{g}/\mathfrak{h}}$
 holds on $\mathfrak{a}\setminus\{0\}$.
The proof is based on the estimate of the volume of $gC\cap C$ obtained in 
 Benoist-Kobayashi~\cite{BK}.

Broadly speaking, these results say that $H$ is small compared with $G$,
 then the representation $L^2(G/H)$ gets close to $L^2(G)$.
The temperedness of $L^2(G/H)$ means that $L^2(G/H)$ can be disintegrated by
 irreducible tempered representations.
Hence any irreducible unitary representation which contributes to 
 the decomposition of $L^2(G/H)$ also contributes to $L^2(G)$.
On the other hand, the existence of the inclusion $L^2(G/H)\subset L^2(G)$
 determines how each irreducible representation appears in $L^2(G/H)$.
For example, if an irreducible unitary representation 
 appears in a discrete spectrum of $L^2(G/H)$,
 then it also appears in a discrete spectrum of $L^2(G)$. 
In particular, we obtain the non-existence of the discrete series for $G/H$
 if $G$ does not have a discrete series and $L^2(G/H)\subset L^2(G)$.

\section{Square Integrable Representations}

Let $G$ be a unimodular Lie group.
For an irreducible unitary representation $\pi$ of $G$,
 the following three conditions are known to be equivalent
 (see Howe-Tan~\cite[Chapter V, Proposition 1.2.3]{HT}):
\begin{enumerate}
\item 
There exists a nonzero matrix coefficient
 of $\pi$ which is a square integrable function on $G$.  
\item 
All matrix coefficients of $\pi$ are square integrable.
\item
$\pi$ is unitarily equivalent to a subrepresentation of
 the left regular representation $L^2(G)$.
\end{enumerate}
Such $\pi$ is called a square integrable representation.
If $G$ is semisimple, it is also called
 Harish-Chandra's discrete series representation.

We would like to generalize this notion to possibly reducible
 unitary representations of $G$. 

\begin{proposition}\label{prop:sqint}
Let $G$ be a unimodular Lie group 
and let $(\pi,\mathcal{H})$ be a unitary representation of $G$.
The following conditions are equivalent.
\begin{enumerate}
\item There exists a dense subset $V\subset \mathcal{H}$
 such that for any vectors $u,v\in V$
 the matrix coefficient $c_{u,v}(g)=\langle\pi(g)u,v\rangle$
 is a square integrable function on $G$.
\item $(\pi,\mathcal{H})$ is unitarily equivalent to a subrepresentation 
 of a direct sum of copies of the left regular representation $L^2(G)$.
\end{enumerate}
\end{proposition}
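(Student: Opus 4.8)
The plan is to prove the two implications separately, the substantial one resting on the polar decomposition of a closed, generally unbounded, intertwining operator. For (2) $\Rightarrow$ (1), suppose $\mathcal{H}$ is a closed $G$-invariant subspace of $\bigoplus_{i\in I}L^{2}(G)$, and let $P$ be the orthogonal projection onto it. I would take $V:=P(\mathcal{D})$, where $\mathcal{D}\subset\bigoplus_{i}L^{2}(G)$ is the dense subspace of families having only finitely many nonzero components, each lying in $C_{c}(G)$; then $V$ is dense in $\mathcal{H}$. For $u=P\mathbf{f}$ and $w=P\mathbf{h}$ with $\mathbf{f},\mathbf{h}\in\mathcal{D}$, using that $P$ is self-adjoint and $G$-equivariant one gets $c_{u,w}(g)=\langle\pi(g)P\mathbf{f},\mathbf{h}\rangle=\sum_{i}\int_{G}(P\mathbf{f})_{i}(g^{-1}x)\,\overline{h_{i}(x)}\,dx$, a finite sum over $i$. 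Bounding each term by the Cauchy--Schwarz inequality and using $\int_{G}\|\mathbf{1}_{g^{-1}K}\cdot\xi\|_{L^{2}}^{2}\,dg=\operatorname{vol}(K)\,\|\xi\|_{L^{2}}^{2}$ for $\xi\in L^{2}(G)$ and $K\subset G$ compact (Fubini and unimodularity) then gives $c_{u,w}\in L^{2}(G)$.

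For (1) $\Rightarrow$ (2), fix $v\in V$ and consider the operator $T_{v}$ from $\mathcal{H}$ to $L^{2}(G)$ with domain $\{u\in\mathcal{H}:c_{u,v}\in L^{2}(G)\}\supseteq V$, defined by $(T_{v}u)(g)=\langle\pi(g)u,v\rangle$. I would check that $T_{v}$ is closed (because $\langle\pi(g)u_{n},v\rangle\to\langle\pi(g)u,v\rangle$ uniformly in $g$, while an $L^{2}$-convergent sequence has an a.e.\ convergent subsequence), densely defined, and intertwines $\pi$ with the right regular representation $\rho$ on $L^{2}(G)$, the latter being equivalent to the left regular representation (via $f\mapsto\bigl(g\mapsto f(g^{-1})\bigr)$, $G$ being unimodular). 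Then I would invoke the polar decomposition $T_{v}=U_{v}|T_{v}|$ of this closed densely-defined operator: as $T_{v}^{*}T_{v}$ commutes with $\pi$, so does $|T_{v}|$, and therefore $U_{v}$ restricts to a $G$-equivariant isometry of $\mathcal{N}_{v}:=(\ker T_{v})^{\perp}=\overline{\operatorname{ran}T_{v}^{*}}$ onto a closed $\rho$-invariant subspace of $L^{2}(G)$. Since $T_{v}^{*}f=\int_{G}f(h^{-1})\pi(h)v\,dh$ for $f\in C_{c}(G)$, the cyclic subrepresentation $W_{v}:=\overline{\pi(G)v}$ is contained in $\mathcal{N}_{v}$; hence $W_{v}$ is unitarily equivalent to a subrepresentation of $L^{2}(G)$.

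To finish I would assemble the $W_{v}$'s by a maximality argument. By Zorn's lemma, fix a maximal family $\{W_{\beta}\}_{\beta\in B}$ of mutually orthogonal closed $G$-invariant subspaces of $\mathcal{H}$, each equivalent to a subrepresentation of $L^{2}(G)$. If $\mathcal{H}_{0}:=\bigoplus_{\beta}W_{\beta}$ were a proper subspace, then, $V$ being dense, some $v\in V$ has nonzero image $v_{0}$ under the projection $P'$ onto $\mathcal{H}_{0}^{\perp}$; the subrepresentation $\overline{P'W_{v}}$ of $\mathcal{H}_{0}^{\perp}$ is nonzero and, being the closure of the image of the $G$-intertwiner $P'|_{W_{v}}$, is a quotient of $W_{v}$ --- hence equivalent to a subrepresentation of $W_{v}$ (a bounded $G$-intertwiner with dense image identifies its target with a subrepresentation of its source, by polar decomposition), and so embeds into $L^{2}(G)$, contradicting maximality. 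Thus $\mathcal{H}=\bigoplus_{\beta}W_{\beta}$ embeds into $\bigoplus_{\beta\in B}L^{2}(G)$.

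The point I expect to be the main obstacle is that $T_{v}$ is genuinely unbounded in general, and that even the naive direct sum $\bigoplus_{v\in V}T_{v}$ is ill-defined because $\sum_{v}\|T_{v}u\|^{2}$ diverges (already when $\pi$ is irreducible, where $T_{v}$ becomes an isometry only after rescaling by the formal degree). The polar decomposition is what turns the unbounded $T_{v}$ into an honest isometry and simultaneously pins down the subrepresentation $W_{v}$ that it accounts for. A related subtlety --- the reason the final assembly is done indirectly, via a maximality argument feeding on the $W_{v}$'s rather than by splitting $\mathcal{H}$ outright --- is that hypothesis (1) is not visibly inherited by an arbitrary closed subrepresentation of $\mathcal{H}$.
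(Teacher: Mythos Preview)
Your argument is correct in both directions.

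For $(2)\Rightarrow(1)$ you do exactly what the paper does: take $V$ to be the projection of the ``finitely supported $C_c(G)$'' vectors in $\bigoplus_i L^2(G)$, reduce to a single summand, and estimate the matrix coefficient. The paper's final bound is $\int_G|\langle\lambda(g)u,v\rangle|^2\,dg\le \|u\|_{L^2}^2\|v\|_{L^1}^2$, obtained by expanding the square and applying Cauchy--Schwarz to the inner $dg$-integral; your bound via $\int_G\|\mathbf{1}_{g^{-1}K}\xi\|_{L^2}^2\,dg=\operatorname{vol}(K)\|\xi\|_{L^2}^2$ is an equally valid variant of the same computation.

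For $(1)\Rightarrow(2)$ the paper gives no argument at all: it simply cites \cite[Chapter V, Corollary 1.2.4]{HT}. Your proof via the closed densely defined intertwiner $T_v$, its polar decomposition, and a Zorn's-lemma assembly of the cyclic pieces $W_v$ is a clean self-contained version of that classical argument. The domain checks (closedness of $T_v$, $\pi$-invariance of $\operatorname{dom}(T_v)$, commutation of $|T_v|$ with $\pi$ via the spectral calculus, and the inclusion $W_v\subset(\ker T_v)^\perp$ from $c_{u,v}(e)=\langle u,v\rangle$) are all in order, as is the final step identifying $\overline{P'W_v}$ with a subrepresentation of $W_v$ through the polar decomposition of the bounded intertwiner $P'|_{W_v}$. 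So nothing is missing; you have supplied the proof the paper outsourced.
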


\begin{proof}
The implication (1) $\Rightarrow$
 (2) is proved in \cite[Chapter V, Corollary 1.2.4]{HT}. 

To prove the converse direction, 
 suppose that $\mathcal{H}$ is a subrepresentation of $L^2(G)^{\oplus I}$
 for a set $I$.
Let 
\[\widetilde{V}:=\{(f_i)_{i\in I}\in L^2(G)^{\oplus I}
\mid f_i\in C_c(G) \text{ for all $i\in I$ and  $f_i=0$
 for all but finite $i\in I$}\}.\]
Here, $C_c(G)$ denotes the space of compact supported continuous functions.
Then $\widetilde{V}$ is a dense subspace of $L^2(G)^{\oplus I}$.
Let $p\colon L^2(G)^{\oplus I}\to \mathcal{H}$ be the orthogonal projection
 and $V:=p(\widetilde{V})$ so that 
 $V$ is a dense subspace of $\mathcal{H}$.
We will show that (1) holds true for this $V$, namely,
 show that $G\ni g\mapsto\langle\pi(g)(u),v\rangle_{\mathcal{H}}$
 is square integrable for $u, v\in V$.
Take $\tilde{v}\in \widetilde{V}$ such that $p(\tilde{v})=v$.
We may assume $\tilde{v}=(v_i)_{i\in I}$ such that
 $v_i\neq 0$ for only one $i\in I$.
Then $\langle\pi(g)(u),v\rangle_{\mathcal{H}}
 = \langle\lambda(g)(u),\tilde{v}\rangle_{L^2(G)^{\oplus I}}
 = \langle\lambda(g)(u_i),v_i\rangle_{L^2(G)}$,
 where $u=(u_i)_{i\in I}$ and
 $\lambda$ denotes the left regular representation of $G$.
Therefore, it is enough to show that
 $G\ni g\mapsto\langle\lambda(g)(u),v\rangle_{L^2(G)}$
 is a square integrable function on $G$
 for any $u\in L^2(G)$ and $v\in C_c(G)$.
This can be verified by the following calculation:
\begin{align*}
&\int_{G} \bigl|\langle\lambda(g)(u),v\rangle_{L^2(G)}\bigr|^2 dg
=\int_{G} \Bigl| \int_{G} u(g^{-1}x)\overline{v(x)} dx \Bigr|^2 dg \\
&\leq \int_{G} \Bigl(\int_{G} |u(g^{-1}x)v(x)| dx \cdot
 \int_{G} |u(g^{-1}y)v(y)| dy \Bigr)dg\\
&= \int_{G}\int_{G} 
 \Bigl(\int_{G} |u(g^{-1}x)||u(g^{-1}y)|dg\Bigr)|v(x)v(y)|dxdy\\
&\leq \|u\|_{L^2(G)}^2\|v\|_{L^1(G)}^2 <\infty. \qedhere
\end{align*}
\end{proof}

\begin{definition}\label{de:sqint}
We say a unitary representation $\pi$ is \emph{square integrable}
 if it satisfies the conditions in Proposition~\ref{prop:sqint}. 
\end{definition}

A closely related notion to the square integrability
 is the temperedness of representations.
The temperedness of possibly reducible unitary representation was
 considered in Benoist-Kobayashi~\cite{BK}. 
Here, we recall definitions following \cite{Kaz, BK}. 

\begin{definition}
Let $(\pi,\mathcal{H})$ and $(\rho,\mathcal{K})$
 be two unitary representations of a Lie group $G$.
We say $\pi$ is weakly contained in $\rho$
 if for every $u\in \mathcal{H}$,
 every compact subset $Q$ of $G$, and every $\varepsilon >0$,
 there exist $v_i\ (1\leq i\leq n)$ in $\mathcal{K}$ such that 
  \[\Bigl|\ \langle \pi(g)u, u \rangle - 
  \sum_{i = 1}^n \langle \rho(g) v_i, v_i\rangle \ \Bigr| < \varepsilon\]
  for all $g \in Q$.
\end{definition}

\begin{definition}\label{de:temp}
A unitary representation $\pi$ of a Lie group $G$ is said to be \emph{tempered} if $\pi$ is weakly contained in the left regular representation $L^2(G)$.
\end{definition}

\begin{proposition}[{\cite[Theorems 1, 2 and Corollary]{CHH}}]
\label{prop:temp}
  Let $G$ be a semisimple Lie group with finite center, $\pi$ a unitary representation of $G$. Then $\pi$ is tempered if and only if 
 there exists a dense subset $V$ of $\mathcal{H}$ such that
 the function
 $g\mapsto \langle\pi(g)u, v\rangle$
 belongs to $L^{2+\varepsilon}(G)$ for any $u,v \in V$
 and $\varepsilon > 0$.
\end{proposition}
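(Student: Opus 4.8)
The plan is to prove the two implications separately, with Harish-Chandra's spherical function $\Xi=\Xi_{G}$ serving as the bridge between temperedness and integrability. I would rely on two standard properties of $\Xi$: first, $\Xi\in L^{2+\varepsilon}(G)$ for every $\varepsilon>0$ while $\Xi\notin L^{2}(G)$ --- this follows from the estimate $\Xi(k_{1}ak_{2})\asymp a^{-\rho}(1+\|\log a\|)^{N}$ together with the fact that the Haar measure on $A^{+}$ grows like $a^{2\rho}$, so that $\int_{G}\Xi^{2+\varepsilon}\,dg\asymp\int_{A^{+}}e^{-\varepsilon\rho(\log a)}(1+\|\log a\|)^{N'}\,da<\infty$; and second, the $\Xi$-domination characterization of temperedness: a unitary representation $\pi$ of $G$ is tempered if and only if every $K$-finite matrix coefficient $c_{u,v}$ satisfies $|c_{u,v}(g)|\le C\,\Xi(g)$ with $C$ depending only on $\|u\|$, $\|v\|$ and the dimensions of the $K$-subrepresentations generated by $u$ and $v$. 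The second property is essentially the substance of \cite{CHH}; its ``only if'' direction comes from the corresponding bound for $\lambda$ itself, transported along weak containment, and its ``if'' direction comes, after decomposing into irreducibles, from the classical description of tempered irreducible representations and the fact (Harish-Chandra's Plancherel theorem) that they are weakly contained in $\lambda$. The case of $G$ compact is immediate, so I assume $G$ is noncompact, hence of positive real rank.

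For ``tempered $\Rightarrow$ almost square integrable'' I would take $V$ to be the dense subspace $\mathcal{H}_{K}$ of $K$-finite vectors. For $u,v\in\mathcal{H}_{K}$ the coefficient $c_{u,v}$ is a finite sum of coefficients between vectors lying in single $K$-isotypic components, so the $\Xi$-domination criterion gives $|c_{u,v}(g)|\le C(u,v)\,\Xi(g)$, and since $\Xi\in L^{2+\varepsilon}(G)$ for all $\varepsilon>0$ we obtain $c_{u,v}\in L^{2+\varepsilon}(G)$ for all $\varepsilon>0$, as required.

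For the converse, suppose $V\subset\mathcal{H}$ is dense and $c_{u,v}\in L^{2+\varepsilon}(G)$ for all $u,v\in V$ and all $\varepsilon>0$; I must show $\pi$ is tempered. Since $G$ is of type I, write $\pi\cong\int^{\oplus}_{\widehat{G}}\sigma\,d\mu(\sigma)$; because a direct integral of representations weakly contained in $\lambda$ is again weakly contained in $\lambda$ (as $\|\int^{\oplus}\sigma(f)\,d\mu\|=\operatorname{ess\,sup}_{\sigma}\|\sigma(f)\|$), it suffices to show that $\mu$ is supported on the tempered part $\widehat{G}_{\mathrm{temp}}$ of the unitary dual. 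Fix a unit vector $u\in V$, write $u=\int^{\oplus}u_{\sigma}\,d\mu(\sigma)$, and consider the positive-definite function $c_{u,u}(g)=\int_{\widehat{G}}\langle\sigma(g)u_{\sigma},u_{\sigma}\rangle\,d\mu(\sigma)$, which by hypothesis lies in $L^{2+\varepsilon}(G)$ for all $\varepsilon>0$. The point is to derive a contradiction if $\mu$ charged $\widehat{G}\setminus\widehat{G}_{\mathrm{temp}}$: on a positive-measure set of non-tempered $\sigma$, the matrix coefficients $\langle\sigma(\cdot)u_{\sigma},u_{\sigma}\rangle$ have, by Harish-Chandra's asymptotic expansion of $K$-finite matrix coefficients, a leading exponent strictly outside the closed negative chamber; after a measurable choice of such exponents and an analysis of the possible cancellation in the integral over $\sigma$, one gets a lower bound for $|c_{u,u}(g)|$ on a suitable cone in $A^{+}$ that is incompatible with $c_{u,u}\in L^{2+\varepsilon}(G)$ for small $\varepsilon$. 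Hence $\mu$ is supported on $\widehat{G}_{\mathrm{temp}}$, and $\pi$ is tempered.

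The routine parts are the two appeals to the asymptotics of $\Xi$, the reduction to $K$-finite vectors, and the $K$-type bookkeeping. The genuine obstacle is the core of the converse direction: upgrading ``$c_{u,u}\in L^{2+\varepsilon}(G)$ for all $\varepsilon>0$'' to control of the support of $\mu$ (equivalently, to the pointwise bound by $\Xi$). This rests on Harish-Chandra's theory of the asymptotic behaviour of matrix coefficients together with the measurable-field argument controlling cancellations --- or, in the packaging of \cite{CHH}, on passing to high tensor powers --- and it is essentially the whole content of the statement; everything else is formal.
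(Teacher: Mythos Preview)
The paper does not prove this proposition at all; it is quoted from \cite{CHH} and used as a black box, so there is no in-paper argument to compare with. That said, let me comment on your sketch on its own merits.

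Your forward direction is fine and standard: temperedness gives the $\Xi$-bound on $K$-finite coefficients, and $\Xi\in L^{2+\varepsilon}(G)$ for every $\varepsilon>0$, so $V=\mathcal{H}_K$ works.

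Your converse, however, is not a proof. The direct-integral route you outline --- disintegrate $\pi$, assume $\mu$ charges $\widehat G\setminus\widehat G_{\mathrm{temp}}$, pick leading exponents measurably, and ``analyse the possible cancellation'' --- founders at exactly the step you flag as the obstacle. First, the fibre vectors $u_\sigma$ are not $K$-finite in $\sigma$, so Harish-Chandra's asymptotic expansion does not apply to $\langle\sigma(\cdot)u_\sigma,u_\sigma\rangle$ as written. Second, even after projecting to $K$-types, the leading exponents vary with $\sigma$ over a possibly diffuse measure, and nothing you have said rules out destructive interference in the superposition $\int_{\widehat G}\langle\sigma(g)u_\sigma,u_\sigma\rangle\,d\mu(\sigma)$; oscillatory integrals of this shape can decay far faster than any individual integrand. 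You yourself concede in the final sentence that the genuine mechanism in \cite{CHH} is the tensor-power trick (a $K$-finite positive-definite $\varphi\in\bigcap_{\varepsilon>0}L^{2+\varepsilon}$ has $|\varphi|^{2n}$ positive-definite and eventually in $L^2$, hence attached to a subrepresentation of $\lambda$, from which one extracts the $\Xi$-bound on $\varphi$ via the uniform estimate for coefficients of representations weakly contained in $\lambda$). Since you neither carry out that argument nor supply a working substitute, your proposal for the converse reduces to a pointer back to \cite{CHH} --- which is precisely what the paper does.
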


It is easy to see from Definition~\ref{de:temp}
 that any square integrable unitary representation
is tempered.

Let $\widehat{G}_{\textup{temp}}$ be the set of equivalence classes of
 irreducible tempered representations of a real reductive Lie group $G$.
Then Harish-Chandra's Plancherel formula of $L^2(G)$ is given as 
\begin{equation}\label{eq:PlL2G}
L^2(G)\simeq \int_{\widehat{G}_{\textup{temp}}}^{\oplus}
 \mathcal{H}_{\sigma}\otimes \mathcal{H}_{\sigma}^* \: d\mu(\sigma).
\end{equation}
Suppose that $(\pi, \mathcal{H})$ is a unitary representation of $G$
 such that $\mathcal{H}$ is a separable Hilbert space.
Then $\pi$ is tempered if and only if $\pi$ is disintegrated into
 irreducible tempered representations (see \cite[Remark 2.6]{BK}):
\begin{equation*}
\pi \simeq \int_{\widehat{G}_{\textup{temp}}}^{\oplus}
 \mathcal{H}_{\sigma}^{\oplus m(\sigma)} \: d\nu(\sigma),
\end{equation*}
where $m(\sigma)$ is the multiplicity function 
 $\widehat{G}_{\textup{temp}}\to \mathbb{N}\cup\{\infty\}$.
In terms of the irreducible decomposition,
 $\pi$ is square integrable if and only if one can take
 the measure $\nu$ to be equivalent to $\mu$.

If we assume moreover that the semisimple part of $G$ is noncompact,
 then $\mathcal{H}_{\sigma}$ is infinite-dimensional for all
 $\sigma\in \widehat{G}_{\textup{temp}}$.
Therefore, the multiplicities of $\sigma\in \widehat{G}_{\textup{temp}}$
 in the left regular representation $L^2(G)$ are all infinite
 by \eqref{eq:PlL2G},
 which implies that $L^2(G)^{\oplus \mathbb{N}}$ is equivalent to $L^2(G)$
 as a unitary representation of $G$.
We thus obtain the following proposition.

\begin{proposition}\label{prop:L2sub}
Suppose that $G$ is a reductive Lie group and its semisimple part is noncompact.
Let $(\pi, \mathcal{H})$ be a unitary representation of $G$
 such that $\mathcal{H}$ is a separable Hilbert space.
Then $\pi$ is square integrable if and only if $\pi$ is unitarily equivalent to
 a subrepresentation of the left regular representation $L^2(G)$.
\end{proposition}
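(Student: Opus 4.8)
The plan is to derive the proposition from Proposition~\ref{prop:sqint} together with the identification $L^2(G)^{\oplus\mathbb{N}}\simeq L^2(G)$ recorded above, which uses that the semisimple part of $G$ is noncompact. One implication is immediate: if $\pi$ is unitarily equivalent to a subrepresentation of $L^2(G)$, then it is a fortiori a subrepresentation of a direct sum of copies of $L^2(G)$, hence square integrable by Proposition~\ref{prop:sqint}. So the content is the converse.

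Assume $\pi$ is square integrable. By Definition~\ref{de:sqint} and Proposition~\ref{prop:sqint}(2), we may identify $\mathcal{H}$ with a closed $G$-invariant subspace of $L^2(G)^{\oplus I}$ for some index set $I$. First I would cut $I$ down to a countable set. Write a vector of $L^2(G)^{\oplus I}$ as a family $(f_i)_{i\in I}$ with $\sum_i\|f_i\|_{L^2(G)}^2<\infty$, so that only countably many components are nonzero. Since $\mathcal{H}$ is separable, pick a countable dense subset $\{x_n\}_{n\in\mathbb{N}}$ of $\mathcal{H}$, write $x_n=(x_{n,i})_{i\in I}$, and let $J:=\bigcup_{n\in\mathbb{N}}\{i\in I: x_{n,i}\neq 0\}$, a countable subset of $I$. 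The subspace $L^2(G)^{\oplus J}\subseteq L^2(G)^{\oplus I}$ of families supported on $J$ is closed (it is the kernel of the coordinate projection onto the complementary factors) and $G$-invariant (the $G$-action is diagonal); since it contains each $x_n$, it contains $\mathcal{H}$. Thus $\pi$ is a subrepresentation of $L^2(G)^{\oplus J}$ with $J$ countable.

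Finally, a countable direct sum embeds $G$-equivariantly into a countably infinite one, $L^2(G)^{\oplus J}\hookrightarrow L^2(G)^{\oplus\mathbb{N}}$, and $L^2(G)^{\oplus\mathbb{N}}\simeq L^2(G)$ by the discussion preceding the proposition. Composing these exhibits $\pi$ as a subrepresentation of $L^2(G)$. I do not anticipate a real obstacle here; the only point requiring care is the reduction to a countable index set, and that is exactly where the separability hypothesis on $\mathcal{H}$ enters, via the support argument above.
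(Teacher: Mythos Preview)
Your argument is correct and follows the same line as the paper: the proposition is deduced from Proposition~\ref{prop:sqint} together with the isomorphism $L^2(G)^{\oplus\mathbb{N}}\simeq L^2(G)$ established in the paragraph preceding the statement. The paper leaves the reduction from an arbitrary index set $I$ to a countable one implicit, whereas you spell out exactly how the separability hypothesis on $\mathcal{H}$ enters; this is a welcome clarification rather than a different approach.
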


\section{Square integrability of $L^2(G/H)$}\label{sec:SqintReg}
Let $G$ be an algebraic reductive group and $H$ an algebraic reductive subgroup of $G$.
In this section we study when $L^2(G/H)$ is a square integrable representation of $G$.

For this purpose, we recall the piecewise linear function $\rho_V$ on a Lie algebra introduced in
 Benoist-Kobayashi~\cite{BK}.

Let $\mathfrak{h}$ be a real reductive Lie algebra
 and let $\mathfrak{a}$ be a maximal split abelian subspace of $\mathfrak{h}$.
For a finite-dimensional $\mathfrak{h}$-module $(\pi, V)$, we define a function
 $\rho_V \colon \mathfrak{a} \rightarrow \mathbb{R}_{\geq 0}$ by
\[\rho_V(Y) := \displaystyle \frac{1}{2} \sum_{\lambda \in \Lambda_Y} m_{\lambda} \left| \operatorname{Re} \lambda \right|\quad (Y \in \mathfrak{a}),\]
where $\Lambda_Y$ is the set of all eigenvalues of $\pi(Y)$ in the complexification $V_\mathbb{C}$ of $V$ and $m_{\lambda}$ is the multiplicity of the eigenvalue $\lambda$.
By definition, $\rho_V(Y)\geq 0$ for all $Y\in \mathfrak{a}$
 and $\rho_V(Y)=0$ implies that $Y$ acts as zero on $V$.

Benoist-Kobayashi \cite{BK} characterizes the temperedness of $L^2(G/H)$
 in terms of the function $\rho_V$.
Let $\mathfrak{q}:=\mathfrak{g}/\mathfrak{h}$.
By the adjoint action, $\mathfrak{g}$ and $\mathfrak{q}$ can be regarded as 
 $\mathfrak{h}$-modules.

\begin{theorem}[{\cite[Theorem 4.1]{BK}}]\label{thm:TempRho}
 Let $G$ be an algebraic semisimple Lie group and $H$ an algebraic reductive subgroup of $G$. 
Then $L^2(G/H)$ is tempered if and only if
 $\rho_{\mathfrak{g}}(Y) \leq 2\rho_{\mathfrak{q}}(Y)$ for any $Y \in \mathfrak{a}$.
\end{theorem}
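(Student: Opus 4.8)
The plan is to convert temperedness into an $L^{2+\varepsilon}$-integrability statement for one explicit matrix coefficient, and then to evaluate the resulting integral over $G$ by analysing how $G/H$ looks near infinity.

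First I would reduce to the function $g\mapsto\mathrm{vol}(gC\cap C)$ attached to a small ball $C$ around the base point $o=eH$ (the volume being taken with respect to the $G$-invariant measure on $G/H$). On the one hand this is the matrix coefficient $\langle\pi(g)\mathbf 1_C,\mathbf 1_C\rangle$; on the other, for $f,f'\in C_c(G/H)$ supported in $C$ one has $|\langle\pi(g)f,f'\rangle|\le\|f\|_\infty\|f'\|_\infty\,\mathrm{vol}(gC\cap C)$, and conversely a strictly positive $f$ dominates a multiple of $\mathbf 1_{C_0}$ for a smaller ball $C_0$. Combined with Proposition~\ref{prop:temp}, this shows that $L^2(G/H)$ is tempered if and only if $g\mapsto\mathrm{vol}(gC\cap C)\in L^{2+\varepsilon}(G)$ for every $\varepsilon>0$ and every ball $C$. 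So the whole problem is the behaviour of $\mathrm{vol}(gC\cap C)$ on $G$, in particular the finiteness of $\int_G\mathrm{vol}(gC\cap C)^{2+\varepsilon}\,dg$.

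The substance is the volume estimate, and this is where I expect the real difficulty. Choose a Cartan involution of $\mathfrak g$ compatible with $\mathfrak h$, so that $\mathfrak a$ lies in a maximal split abelian subspace $\mathfrak a_{\mathfrak g}$ of $\mathfrak g$. The function $\mathrm{vol}(gC\cap C)$ is supported on a two-sided tube around $H$ (the $g$ carrying some point of $C$ to another point of $C$), and I would cut this tube along the Cartan decomposition of $H$, recording the $H$-direction of $g$ by a parameter $Y\in\overline{\mathfrak a^+}$. The crucial linear-algebra input is that $\mathrm{ad}(Y)$ acts on $T_o(G/H)\cong\mathfrak q$ with real eigenvalues occurring in $\pm$ pairs of equal multiplicity: $\mathrm{ad}(Y)$ is skew for the Killing form and restricts to the nondegenerate $\mathrm{ad}(\mathfrak h)$-invariant complement $\mathfrak h^{\perp}\cong\mathfrak q$, since $H$ is reductive in $G$. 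The contracting eigendirections make $\mathrm{vol}(gC\cap C)$ of size $e^{-\rho_{\mathfrak q}(Y)}$ (with two-sided bounds, up to a polynomial in $Y$), while the expanding eigendirections fatten the slice of the tube at parameter $Y$, contributing an extra factor $e^{\rho_{\mathfrak q}(Y)}$; together with the leading term $e^{2\rho_H(Y)}=e^{\rho_{\mathfrak h}(Y)}$ of the $H$-Cartan Jacobian, the $Y$-density of Haar measure on the tube is $e^{\rho_{\mathfrak h}(Y)+\rho_{\mathfrak q}(Y)}=e^{\rho_{\mathfrak g}(Y)}$ up to a polynomial. The technical obstacles are exactly that the image of a small ball under a large group element is no longer a ball, that one must exclude ``wrap-around'' re-intersections of this image with $C$, and that the polynomial corrections have to be shown harmless near the walls where $\rho_{\mathfrak q}$ degenerates; establishing all of this uniformly as $g\to\infty$ is the technical core of Benoist--Kobayashi~\cite{BK}, and I would invoke their estimates here rather than redo them.

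Granting the estimate, $\int_G\mathrm{vol}(gC\cap C)^{2+\varepsilon}\,dg$ reduces, up to polynomial factors, to $\int_{\overline{\mathfrak a^+}}e^{\rho_{\mathfrak g}(Y)-(2+\varepsilon)\rho_{\mathfrak q}(Y)}\,dY$. Since $\rho_{\mathfrak g}$ and $\rho_{\mathfrak q}$ are piecewise linear and positively homogeneous, this integral is finite for every $\varepsilon>0$ precisely when $\rho_{\mathfrak g}(Y)\le 2\rho_{\mathfrak q}(Y)$ on $\overline{\mathfrak a^+}$; the ``only if'' half uses the matching lower bound in the volume estimate, so that a violation of the inequality along some ray forces divergence of the integral for small $\varepsilon$ and hence non-temperedness. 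Finally, both $\rho_{\mathfrak g}$ and $\rho_{\mathfrak q}$ are invariant under the Weyl group $W(\mathfrak h,\mathfrak a)$ of the restricted root system of $\mathfrak h$, so the inequality on $\overline{\mathfrak a^+}$ is equivalent to the one on all of $\mathfrak a$, which is the assertion. (One small point to watch in the forward direction is the locus where $\rho_{\mathfrak q}$ vanishes; there $Y$ acts trivially on $\mathfrak q$, and when $G$ is semisimple the inequality then forces $Y=0$, so no genuine difficulty arises.)
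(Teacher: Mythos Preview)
The paper does not give its own proof of this statement: Theorem~\ref{thm:TempRho} is quoted verbatim from Benoist--Kobayashi \cite[Theorem~4.1]{BK} as background for the main result (Theorem~\ref{thm:SqintRho}), and no argument is supplied. So there is nothing in the paper to compare your proposal against.

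That said, your sketch is a faithful outline of the Benoist--Kobayashi argument itself. The reduction from temperedness to $L^{2+\varepsilon}$-integrability of $g\mapsto\mathrm{vol}(gC\cap C)$ via \cite{CHH}, the two-sided estimate $\mathrm{vol}(gC\cap C)\asymp e^{-\rho_{\mathfrak q}(Y)}$ along the Cartan direction $Y$ of $H$, the identification of the Haar density on the tube around $H$ with $e^{\rho_{\mathfrak h}(Y)+\rho_{\mathfrak q}(Y)}=e^{\rho_{\mathfrak g}(Y)}$, and the conclusion that the integral converges for all $\varepsilon>0$ iff $\rho_{\mathfrak g}\le 2\rho_{\mathfrak q}$ on $\mathfrak a$ --- this is exactly the architecture of \cite{BK}. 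You are also right that the hard analytic content is the uniform volume estimate, and you correctly defer to \cite{BK} for it rather than claim to redo it. Since the present paper only needs the $p=2$ case of the same machinery (recorded as Proposition~\ref{prop:VolEstimate}) and cites \cite{BK} for it as well, your treatment is entirely in line with how the paper handles this material.
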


The following is a main theorem of this paper,
 which gives a sufficient condition for $L^2(G/H)$ to be square integrable
 in terms of functions $\rho$.

\begin{theorem}\label{thm:SqintRho}
Let $G$ be an algebraic reductive Lie group and
 $H$ an algebraic reductive subgroup of $G$. 
The unitary representation of $G$ in $L^2(G/H)$ is a square integrable representation if 
 $\rho_{\mathfrak{g}}(Y) < 2\rho_{\mathfrak{q}}(Y)$ for any $Y\in \mathfrak{a} \setminus \{0\}$.
\end{theorem}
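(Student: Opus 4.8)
The plan is to prove square integrability via condition (1) of Proposition~\ref{prop:sqint}: we must exhibit a dense subspace $V\subset L^2(G/H)$ all of whose matrix coefficients lie in $L^2(G)$. The natural choice is $V=C_c(G/H)$, the compactly supported continuous functions. For $\varphi,\psi\in C_c(G/H)$ with supports contained in a fixed compact set $C\subset G/H$, the matrix coefficient is
\[
c_{\varphi,\psi}(g)=\int_{G/H}\varphi(g^{-1}x)\overline{\psi(x)}\,dx,
\]
which is supported on $\{g\in G: gC\cap C\neq\emptyset\}$ and bounded pointwise (via Cauchy--Schwarz) by $\|\varphi\|_\infty\|\psi\|_\infty\cdot\mathrm{vol}(gC\cap C)$. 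Hence $\int_G|c_{\varphi,\psi}(g)|^2\,dg$ is controlled by $\int_G \mathrm{vol}(gC\cap C)^2\,dg$, and the whole problem reduces to showing that this integral is finite. Here is precisely where the Benoist--Kobayashi volume estimate enters.

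The key input I would invoke is the volume estimate from \cite{BK}: for a compact set $C\subset G/H$, as $g\to\infty$ in $G$, the volume $\mathrm{vol}(gC\cap C)$ decays at a rate governed by the function $\rho_{\mathfrak{q}}$ restricted to the $\mathfrak{a}$-direction of $g$, measured against the Haar weight on $G$ which is governed by $\rho_{\mathfrak{g}}$. More concretely, writing the integral over $G$ in a Cartan-type decomposition adapted to $\mathfrak{a}$ (the maximal split abelian subspace of $\mathfrak{h}$), one has roughly $\int_G f(g)\,dg \sim \int_{\mathfrak{a}^+} f(\exp Y)\, e^{2\rho_{\mathfrak{g}}(Y)}\,dY$ (up to the compact directions, which are harmless since everything is compactly supported there), while the Benoist--Kobayashi estimate gives $\mathrm{vol}(\exp(Y)C\cap C)\lesssim e^{-2\rho_{\mathfrak{q}}(Y)}$ for $Y$ in the relevant cone. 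Squaring the volume contributes $e^{-4\rho_{\mathfrak{q}}(Y)}$, so the integrand behaves like $e^{2\rho_{\mathfrak{g}}(Y)-4\rho_{\mathfrak{q}}(Y)}=e^{2(\rho_{\mathfrak{g}}(Y)-2\rho_{\mathfrak{q}}(Y))}$. The hypothesis $\rho_{\mathfrak{g}}(Y)<2\rho_{\mathfrak{q}}(Y)$ on $\mathfrak{a}\setminus\{0\}$, together with the piecewise linearity (homogeneity of degree one) of both functions, forces the exponent to be $\leq -\delta|Y|$ for some $\delta>0$ outside a neighborhood of the origin, so the integral converges. (By contrast, the non-strict inequality of Theorem~\ref{thm:TempRho} only yields $L^{2+\varepsilon}$-type control, i.e.\ temperedness — this is the precise reason the strict inequality buys square integrability.)

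The steps in order: (i) reduce to finiteness of $\int_G \mathrm{vol}(gC\cap C)^2\,dg$ via the Cauchy--Schwarz bound on $c_{\varphi,\psi}$ and density of $C_c(G/H)$; (ii) quote the precise form of the Benoist--Kobayashi volume estimate and the parametrization of the Haar integral on $G$ in terms of $\mathfrak{a}$; (iii) combine them to bound the integrand by $e^{2(\rho_{\mathfrak{g}}-2\rho_{\mathfrak{q}})(Y)}$ times polynomial factors; (iv) use piecewise linearity to conclude that the strict inequality on $\mathfrak{a}\setminus\{0\}$ gives uniform exponential decay, hence convergence.

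I expect the main obstacle to be step (ii): extracting from \cite{BK} a version of the volume estimate that is uniform enough and stated with the correct normalization to feed directly into the Haar-measure computation. In particular, Benoist--Kobayashi's estimate is naturally phrased in terms of the decay along a single one-parameter direction, and one must check that the implied constants can be taken locally uniform as the direction varies over the sphere in $\mathfrak{a}$, and that the contribution of the ``compact part'' of the Cartan decomposition of $G$ (the $K$-parts on both sides, and the part of $\mathfrak{a}_{\mathfrak{g}}$ transverse to $\mathfrak{a}_{\mathfrak{h}}$) does not spoil convergence. A secondary subtlety is the case where $G$ is reductive but not semisimple — one should reduce to the semisimple case by splitting off the center, noting that the center acts trivially on $G/H$ only in directions lying in $H$, so the hypothesis on $\mathfrak{a}\setminus\{0\}$ handles the remaining central directions. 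Once these uniformity issues are settled, the exponential bookkeeping is routine.
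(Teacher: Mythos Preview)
Your approach is correct and essentially the same as the paper's: both exhibit a dense subspace of compactly supported functions (the paper uses spans of characteristic functions $1_C$, you use $C_c(G/H)$) and bound the matrix coefficients pointwise by $\mathrm{vol}(gC\cap C)$, reducing everything to $\mathrm{vol}(gC\cap C)\in L^2(G)$. The paper's proof is much shorter because your steps (ii)--(iv) are already packaged in \cite{BK} as the $p=2$ case of their Proposition~4.3 (quoted here as Proposition~\ref{prop:VolEstimate}), which states precisely that $\mathrm{vol}(gC\cap C)\in L^2(G)$ if and only if $\rho_{\mathfrak g}<2\rho_{\mathfrak q}$ on $\mathfrak a\setminus\{0\}$; so you need not re-run the Cartan-type analysis you sketch (and indeed your heuristic uses the wrong decomposition --- the BK argument is organized around the Cartan decomposition of $H$, not of $G$). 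One point the paper makes explicit that you only gesture at: the Benoist--Kobayashi estimate requires the $G$-action on $G/H$ to have compact kernel $S$, and the paper checks this directly from the hypothesis (any $Y\in\mathrm{Lie}(S)\cap\mathfrak a$ acts trivially on $\mathfrak q$, so $\rho_{\mathfrak q}(Y)=0$, forcing $Y=0$ by the strict inequality).
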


We note that in the setting of Theorem~\ref{thm:SqintRho}, 
 $L^2(G/H)$ is a square integrable representation if and only if 
 $L^2(G/H)$ is unitarily equivalent to a subrepresentation of $L^2(G)$.
If the semisimple part of $G$ is noncompact, this follows from Proposition~\ref{prop:L2sub}.
If the semisimple part of $G$ is compact, it is easy to see that these conditions are
 equivalent to the compactness of $H$.

For the proof of the theorem, we use the following proposition, 
 the $p=2$ case of \cite[Proposition 4.3]{BK}.

\begin{proposition}\label{prop:VolEstimate}
Let $G$ be an algebraic reductive Lie group and
 $H$ an algebraic reductive subgroup of $G$ such that
 the action of $G$ on $G/H$ has compact kernel.
Then $\mathrm{vol}(gC\cap C)\in L^2(G)$ for any compact set $C$ in $G/H$
 if and only if $\rho_{\mathfrak{g}}(Y) < 2\rho_{\mathfrak{q}}(Y)$ for any $Y\in \mathfrak{a} \setminus \{0\}$. 
\end{proposition}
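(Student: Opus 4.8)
The plan is to reduce the $L^2(G)$-integrability of the single function $g\mapsto\mathrm{vol}(gC\cap C)$ to the convergence of an integral over a Weyl chamber $\mathfrak{a}^+\subset\mathfrak{a}$ whose exponential rate is exactly $\rho_{\mathfrak{h}}-\rho_{\mathfrak{q}}$. First I would reduce to a convenient model for $C$: since the invariant measure is $G$-invariant and the action has compact kernel, the truth of ``$\mathrm{vol}(gC\cap C)\in L^2(G)$'' is insensitive to replacing $C$ by any compact set with nonempty interior, so I take $C$ to be the image in $G/H$ of $\exp(B)$ for a small ball $B\subset\mathfrak{q}$ around $0$, identifying a neighborhood of the base point $o=eH$ with $B$ via $n\mapsto\exp(n)o$. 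Next I would disintegrate Haar measure by the Weil formula for the unimodular pair $(G,H)$,
\[\int_G F(g)\,dg=\int_{G/H}\int_H F(s(x)h)\,dh\,dx,\]
where $s$ is a measurable section of $G\to G/H$ and $dx$ the invariant measure; since $h\in H$ fixes $o$, the point $s(x)h\cdot o$ equals $x$, so the $x$-integral records the displacement of the base point while the $h$-integral records the ``internal'' part along $H$.

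The computation then splits into two rate estimates. For the $h$-integral I would use the Cartan decomposition $H=K_H\exp(\overline{\mathfrak{a}^+})K_H$ and its integration formula, whose Jacobian is $\delta(Y)=\prod_{\alpha\in\Sigma^+(\mathfrak{h},\mathfrak{a})}|\sinh\alpha(Y)|^{m_\alpha}$; for $Y$ deep in $\mathfrak{a}^+$ one has $\delta(Y)\asymp\exp\bigl(\sum_{\alpha>0}m_\alpha\alpha(Y)\bigr)$, and because the eigenvalues of $\mathrm{ad}(Y)$ on $\mathfrak{h}_{\mathbb{C}}$ are exactly the $\alpha(Y)$, this exponent equals $\rho_{\mathfrak{h}}(Y)$ in the normalization of \cite{BK}. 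For the $x$-integral I would linearize near $o$: writing $h=k_1\exp(Y)k_2$, the distorted neighborhood $hC$ corresponds under $\exp$ to $\mathrm{Ad}(\exp Y)B$, an ellipsoid with semi-axes $e^{\operatorname{Re}\lambda(Y)}$ (the $K_H$-factors contributing only bounded distortions and the action on $\mathfrak{q}$ being volume preserving), so that $\mathrm{vol}(s(x)hC\cap C)$ becomes the overlap function $f_Y(n_0)=\mathrm{vol}\bigl(B\cap(n_0+\mathrm{Ad}(\exp Y)B)\bigr)$ in the $\mathfrak{q}$-coordinate $n_0$ of $x$. Here $\|f_Y\|_1\asymp 1$, $\|f_Y\|_\infty\asymp\prod_{\operatorname{Re}\lambda<0}e^{\operatorname{Re}\lambda(Y)}=e^{-\rho_{\mathfrak{q}}(Y)}$, and $f_Y$ is supported on a set of volume $\asymp e^{\rho_{\mathfrak{q}}(Y)}$; combining these via $\|f_Y\|_2^2\le\|f_Y\|_\infty\|f_Y\|_1$ together with the Cauchy--Schwarz bound $\|f_Y\|_2^2\ge\|f_Y\|_1^2/\operatorname{vol}(\operatorname{supp}f_Y)$ on $\operatorname{supp}f_Y$ yields $\int_{\mathfrak{q}}f_Y(n_0)^2\,dn_0\asymp e^{-\rho_{\mathfrak{q}}(Y)}$.

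Multiplying the two rates and integrating over $\mathfrak{a}^+$ gives
\[\int_G\mathrm{vol}(gC\cap C)^2\,dg\asymp\int_{\mathfrak{a}^+}e^{-\rho_{\mathfrak{q}}(Y)}\,e^{\rho_{\mathfrak{h}}(Y)}\,dY=\int_{\mathfrak{a}^+}e^{(\rho_{\mathfrak{h}}-\rho_{\mathfrak{q}})(Y)}\,dY,\]
up to bounded and polynomial factors coming from the $K_H\times K_H$-integration and from $\delta(Y)\big/e^{\rho_{\mathfrak{h}}(Y)}$. Since $\mathfrak{g}\cong\mathfrak{h}\oplus\mathfrak{q}$ as $\mathfrak{a}$-modules and $\rho_V$ is additive in $V$, we have $\rho_{\mathfrak{g}}=\rho_{\mathfrak{h}}+\rho_{\mathfrak{q}}$, so the exponent is $\rho_{\mathfrak{h}}-\rho_{\mathfrak{q}}=\rho_{\mathfrak{g}}-2\rho_{\mathfrak{q}}$, and the integral converges precisely when $\rho_{\mathfrak{g}}(Y)<2\rho_{\mathfrak{q}}(Y)$ for all $Y\in\mathfrak{a}^+\setminus\{0\}$. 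The compact-kernel hypothesis guarantees that $\rho_{\mathfrak{q}}$ does not vanish in a spurious direction, so that this open condition is exactly the convergence threshold, and the two-sided bounds give both implications (for the ``only if'' direction one restricts $g$ to $H$, or to the offending ray, to force divergence).

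The step I expect to be the main obstacle is making the $x$-integral estimate rigorous uniformly over the whole support, rather than only infinitesimally near $o$. The linearization $hC\leftrightarrow\mathrm{Ad}(\exp Y)B$ and the identification of the invariant measure with Lebesgue measure $dn_0$ are exact only to first order at $o$, whereas the support of $f_Y$ extends a distance $\asymp e^{\max\operatorname{Re}\lambda(Y)}$ along the expanding eigendirections, i.e.\ far into the region where exponential coordinates distort both the set $hC$ and the measure. Controlling these curvature corrections---showing that they alter the integrand only by subexponential factors that cannot change the dichotomy, and that the exponential rate $e^{(\rho_{\mathfrak{h}}-\rho_{\mathfrak{q}})(Y)}$ is genuinely two-sided---is the technical heart of the argument, and is exactly what is carried out in the volume estimate of \cite{BK}.
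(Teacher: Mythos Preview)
The paper does not prove this proposition at all: it is quoted verbatim as the case $p=2$ of \cite[Proposition~4.3]{BK}, with no argument given. So there is no ``paper's own proof'' against which to compare your attempt.

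That said, your sketch is a faithful outline of the Benoist--Kobayashi argument. The decomposition $\int_G=\int_{G/H}\int_H$ followed by the Cartan decomposition of $H$, the identification of the Cartan Jacobian with $e^{\rho_{\mathfrak{h}}(Y)}$, the linearized model $f_Y(n_0)=\mathrm{vol}\bigl(B\cap(n_0+\mathrm{Ad}(\exp Y)B)\bigr)$ on $\mathfrak{q}$, and the two-sided estimate $\|f_Y\|_2^2\asymp e^{-\rho_{\mathfrak{q}}(Y)}$ via $\|f_Y\|_1\asymp 1$, $\|f_Y\|_\infty\asymp e^{-\rho_{\mathfrak{q}}(Y)}$, $\mathrm{vol}(\mathrm{supp}\,f_Y)\asymp e^{\rho_{\mathfrak{q}}(Y)}$ are all correct (note that volume-preservation on $\mathfrak{q}$ uses unimodularity of both $G$ and $H$). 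The identity $\rho_{\mathfrak{h}}-\rho_{\mathfrak{q}}=\rho_{\mathfrak{g}}-2\rho_{\mathfrak{q}}$ then reduces convergence to the strict inequality, and Weyl-invariance of $\rho_V$ lets you pass from the closed chamber $\overline{\mathfrak{a}^+}$ to all of $\mathfrak{a}$. You have also correctly located the only real difficulty: the linearization and the identification of the invariant measure with $dn_0$ are valid only near $o$, while the support of $f_Y$ stretches by $e^{\max\operatorname{Re}\lambda(Y)}$; controlling this globally is precisely the content of the volume estimates in \cite{BK}, and your sketch defers to them, as does the paper itself.
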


\begin{proof}[Proof of Theorem~\ref{thm:SqintRho}]
Let $G$ and $H$ be as in the statement of the theorem and suppose
 $\rho_{\mathfrak{g}}(Y) < 2\rho_{\mathfrak{q}}(Y)$
 for any $Y\in \mathfrak{a} \setminus \{0\}$.
Let $S$ denote the kernel of the $G$-action on $G/H$.
We have $S\subset H$ and $S$ is a normal subgroup of $G$. 
Since the adjoint action of $S$ is trivial on $\mathfrak{q}$, 
 the function $\rho_{\mathfrak{q}}$ is zero on
 $\operatorname{Lie}(S)\cap \mathfrak{a}$.
Then our assumption $\rho_{\mathfrak{g}} < 2\rho_{\mathfrak{q}}$
 implies $\operatorname{Lie}(S)\cap \mathfrak{a}=\{0\}$
 and hence $S$ must be compact.
We can therefore apply Proposition~\ref{prop:VolEstimate}
 and conclude that 
 $\mathrm{vol}(gC\cap C)\in L^2(G)$ for any compact set $C$ in $G/H$.
Let $V$ be the linear span of the characteristic functions $1_C$
 for all compact sets $C$ in $G/H$.
Then $V$ is dense in $L^2(G/H)$ and 
 the matrix coefficient $c_{u,v}$ of any $u,v\in V$ belongs to $L^2(G)$.
\end{proof}

\begin{remark}
In contrast to Theorem~\ref{thm:TempRho}, the converse implication
 of Theorem~\ref{thm:SqintRho} is not true in general.
For example, let $(G,H)=(SL(3,\mathbb{R}),SL(2,\mathbb{R}))$,
 where $SL(2,\mathbb{R})$ is the semisimple part of the symmetric subgroup
 $S(GL(2,\mathbb{R})\times GL(1,\mathbb{R}))$.
Then $\rho_{\mathfrak{g}}=2\rho_{\mathfrak{q}}$ on $\mathfrak{a}$
 but $L^2(G/H)$ is square integrable.
\end{remark}

As an application of Theorem~\ref{thm:SqintRho},
 we will obtain the non-existence of discrete series for
 some homogeneous spaces $G/H$.
Let $\textup{Disc}(G/H)$ denote the discrete series for $G/H$,
 namely, $\textup{Disc}(G/H)$ is the set of equivalence classes
 of irreducible unitary representations of $G$ which are unitary subrepresentations of $L^2(G/H)$.
When $H$ is trivial, $\textup{Disc}(G)=\textup{Disc}(G/1)$
 consists of the irreducible square integrable representations.
Harish-Chandra~\cite{HC66}
 proved that $\textup{Disc}(G)\neq \emptyset$ if and only if
 $\operatorname{rank} G=\operatorname{rank} K$, where $K$ is a maximal compact subgroup of $G$.

\begin{corollary}\label{cor:DS}
Let $G$ be an algebraic reductive group and
 $H$ an algebraic reductive subgroup of $G$. 
Suppose that $\rho_{\mathfrak{g}}(Y) < 2\rho_{\mathfrak{q}}(Y)$ for any
 $Y \in \mathfrak{a} \setminus \{0\}$.
Then $\textup{Disc}(G/H) \subset \textup{Disc}(G)$.
In particular, if moreover
 $\textup{Disc}(G) = \emptyset$, then $\textup{Disc}(G/H) = \emptyset$.
\end{corollary}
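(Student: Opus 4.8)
The plan is to deduce the corollary directly from Theorem~\ref{thm:SqintRho} together with Schur's lemma. Under the hypothesis $\rho_{\mathfrak g}(Y)<2\rho_{\mathfrak q}(Y)$ on $\mathfrak a\setminus\{0\}$, Theorem~\ref{thm:SqintRho} asserts that $L^2(G/H)$ is square integrable, so by Definition~\ref{de:sqint} and condition (2) of Proposition~\ref{prop:sqint} there is a $G$-equivariant isometric embedding $L^2(G/H)\hookrightarrow L^2(G)^{\oplus I}$ for some index set $I$. Given $\pi\in\textup{Disc}(G/H)$, realize $\pi$ as an irreducible closed $G$-invariant subspace $\mathcal H_\pi\subset L^2(G/H)$; composing with the embedding above, $\mathcal H_\pi$ becomes a closed $G$-invariant subspace of $L^2(G)^{\oplus I}$.

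It then remains to descend from the direct sum $L^2(G)^{\oplus I}$ to a single copy of $L^2(G)$. For each $i\in I$, let $q_i\colon\mathcal H_\pi\to L^2(G)$ be the composite of this inclusion with the $i$-th coordinate projection; each $q_i$ is a bounded $G$-intertwining operator, and since $\mathcal H_\pi\neq\{0\}$ we may pick $i_0$ with $q_{i_0}\neq 0$. The operator $q_{i_0}^*q_{i_0}$ on $\mathcal H_\pi$ is positive, self-adjoint and commutes with $\pi$, so Schur's lemma gives $q_{i_0}^*q_{i_0}=c\cdot\mathrm{id}_{\mathcal H_\pi}$ for some $c\geq 0$; moreover $c>0$ because $\langle q_{i_0}^*q_{i_0}v,v\rangle=\|q_{i_0}v\|^2$ is not identically zero. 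Hence $c^{-1/2}q_{i_0}$ is a $G$-equivariant isometry of $\mathcal H_\pi$ onto a closed invariant subspace of $L^2(G)$, and therefore $\pi\in\textup{Disc}(G)$. This proves $\textup{Disc}(G/H)\subset\textup{Disc}(G)$, and the remaining assertion is then immediate, since $\textup{Disc}(G)=\emptyset$ forces $\textup{Disc}(G/H)=\emptyset$.

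I do not anticipate a genuine obstacle here: all the substance sits in Theorem~\ref{thm:SqintRho}, and the only delicate point is the passage from a subrepresentation of the possibly infinite direct sum $L^2(G)^{\oplus I}$ to a subrepresentation of $L^2(G)$ itself, which is exactly where irreducibility of $\pi$ and Schur's lemma enter. (When the semisimple part of $G$ is noncompact one could alternatively invoke Proposition~\ref{prop:L2sub}, using that $L^2(G/H)$ is separable, but the argument via Schur's lemma works without any case distinction.)
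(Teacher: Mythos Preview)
Your argument is correct. The paper does not supply an explicit proof of this corollary; it treats the statement as immediate because, just after Theorem~\ref{thm:SqintRho}, it has already observed that in the present setting square integrability of $L^2(G/H)$ is \emph{equivalent} to $L^2(G/H)$ being a subrepresentation of a single copy of $L^2(G)$ (via Proposition~\ref{prop:L2sub} when the semisimple part of $G$ is noncompact, and by a direct check otherwise). From that viewpoint the corollary is a one-line consequence: any irreducible subrepresentation of $L^2(G/H)\subset L^2(G)$ lies in $\textup{Disc}(G)$. Your route is slightly different: you stay with the \emph{a priori} weaker conclusion $L^2(G/H)\hookrightarrow L^2(G)^{\oplus I}$ from Proposition~\ref{prop:sqint}(2) and then use irreducibility of $\pi$ together with Schur's lemma to land in a single summand. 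The advantage of your approach, as you note, is that it bypasses the case distinction and the appeal to Proposition~\ref{prop:L2sub}; the paper's approach has the virtue of having already recorded the stronger embedding $L^2(G/H)\subset L^2(G)$, which it uses elsewhere.
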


\section{Examples}

We apply results of Section~\ref{sec:SqintReg}
 to several homogeneous spaces as examples to obtain
 the square integrability and the non-existence of discrete series.
The following assertions can be verified by
 explicit calculations of functions $\rho$, 
 Theorem~\ref{thm:SqintRho} and Corollary~\ref{cor:DS}.

\begin{example}
Let $n_1 \geq n_2 \geq \cdots \geq n_r \geq 1 $ and 
 $n = n_1 + \cdots + n_r$.
Then the unitary representation 
 $L^2(SL(n,\mathbb{R})/\prod_{k=1}^{r}SL(n_k,\mathbb{R}))$
 of $SL(n,\mathbb{R})$ is square integrable
 if $2n_1 \leq n$ and $n_1 + n_2 <n$.
\end{example}

\begin{example}
  Let $n \geq p+q \geq 3$.
  Then $L^2(SL(n,\mathbb{R})/SO(p,q))$ is
 always a square integrable representation of $SL(n,\mathbb{R})$
 and we have $\textup{Disc}(SL(n,\mathbb{R})/SO(p,q))=\emptyset$.
\end{example}

\begin{example}\label{ex:SOpq}
Let $p \geq p_1 + \cdots + p_r$ and $q \geq q_1 + \cdots + q_r$.
Then the unitary representation
 $L^2(SO(p,q)/\prod_{k=1}^{r}SO(p_k,q_k))$ of $SO(p,q)$ 
 is square integrable 
 if $2\max_{p_k q_k \neq 0}(p_k + q_k) \leq p+q+1$.
\end{example}

For the last example, we can determine exactly when 
 $SO(p,q)/\prod_{k=1}^{r}SO(p_k,q_k)$ has a discrete series
 as we will see from now.

First, consider the case where
 $G/H=SO(p,q)/\prod_{k=1}^{r}SO(p_k,q_k)$ is a symmetric space. 
Then by results of Flensted-Jensen~\cite{FJ} and
 Oshima-Matsuki~\cite{OM},
 $\textup{Disc}(G/H)\neq \emptyset$ if and only if 
 $\operatorname{rank} G/H = \operatorname{rank} K/(H\cap K)$,
 where $K=S(O(p)\times O(q))$.
More explicitly in terms of parameters,
 $SO(p,q)/\prod_{k=1}^{r}SO(p_k,q_k)$ is a symmetric space
 when $r=2$, $p_1+p_2=p$ and $q_1+q_2=q$.
Assume $p_1+q_1\geq p_2+q_2$.
Then
$\textup{Disc}(SO(p,q)/(SO(p_1,q_1)\times SO(p_2,q_2)))\neq \emptyset$
 if and only if $p_1\geq p_2$ and $q_1\geq q_2$.

Next, in the case where
 $G/H=SO(p,q)/\prod_{k=1}^{r}SO(p_k,q_k)$ is not a symmetric space,
 the exact condition is given by the following theorem.

\begin{theorem}\label{thm:SOpqDS}
Let $p \geq p_1 + \cdots + p_r$ and $q \geq q_1 + \cdots + q_r$.
Assume $p_1+q_1\geq p_2+q_2\geq \cdots \geq p_r+q_r$.
Set $(G,H)=(SO(p,q), \prod_{k=1}^{r}SO(p_k,q_k))$
 and assume $(G,H)$ is not a symmetric pair.
\begin{enumerate}
\item If $p$ and $q$ are even,
 then $\textup{Disc}(G/H)\neq \emptyset$.
\item Suppose that $p$ is even and $q$ is odd.
 Then $\textup{Disc}(G/H)\neq \emptyset$
 if and only if $2(p_1+q_1)\leq p+q$ or $q_1\neq 0$.
\item Suppose that $p$ and $q$ are odd.
 Then $\textup{Disc}(G/H)\neq \emptyset$
 if and only if $p_1q_1\neq 0$ and $2(p_1+q_1)\geq p+q+2$.
\end{enumerate}
\end{theorem}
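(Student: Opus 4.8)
The plan is to combine the square-integrability criterion of Theorem~\ref{thm:SqintRho} (via Corollary~\ref{cor:DS}) for the negative assertions with explicit constructions of discrete series for the positive assertions. First I would compute the functions $\rho_{\mathfrak{g}}$ and $\rho_{\mathfrak{q}}$ explicitly. Writing $m=p+q$, $H=\prod_k SO(p_k,q_k)$ has a maximal split abelian subspace $\mathfrak{a}=\bigoplus_k \mathfrak{a}_k$ where $\dim\mathfrak{a}_k=\min(p_k,q_k)$, and on each coordinate hyperplane one gets a piecewise-linear expression: for $Y=(Y_1,\dots,Y_r)$ with $Y_k$ having "singular values" $t_{k,1}\ge\dots\ge t_{k,\ell_k}\ge 0$, the standard representation of $SO(p,q)$ contributes $\rho_{\mathfrak{g}}$ and the adjoint action on $\mathfrak{g}$, $\mathfrak{h}$, hence $\mathfrak{q}$, contributes terms that are sums of $|t_{k,i}\pm t_{k,j}|$ and $|t_{k,i}|$ with the multiplicities determined by $p,q$ and the $p_k,q_k$. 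The key algebraic step is to check that the strict inequality $\rho_{\mathfrak{g}}<2\rho_{\mathfrak{q}}$ on $\mathfrak{a}\setminus\{0\}$ holds precisely in the complementary ranges to those listed in (2) and (3); by homogeneity it suffices to test on each closed Weyl chamber face, reducing to finitely many linear inequalities in the $t_{k,i}$. This yields $\textup{Disc}(G/H)=\emptyset$ in the "only if" directions of (2) and (3) via Corollary~\ref{cor:DS}.

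For the positive assertions --- case (1), the remaining subcase of (2) (namely $q_1\ne 0$ but $2(p_1+q_1)>p+q$, together with the subcase $2(p_1+q_1)\le p+q$), and the "if" direction of (3) --- I would exhibit nonzero discrete series. The natural strategy is induction on $r$: embed $H'=\prod_{k\ge 2}SO(p_k,q_k)$ diagonally in a smaller $SO(p',q')$ and relate $L^2(SO(p,q)/H)$ to an induced picture, or better, realize the relevant representations as constituents of $L^2$ of a symmetric space $SO(p,q)/(SO(p_1,q_1)\times SO(p-p_1,q-q_1))$ and then restrict/project. More concretely, one uses the classification of discrete series for real spherical or reductive spherical spaces together with the rank condition $\operatorname{rank} G/H=\operatorname{rank} K/(H\cap K)$ in the symmetric case (Flensted-Jensen, Oshima-Matsuki) as the base case, and for the non-symmetric cases one invokes the known construction of discrete series for $SO(p,q)/SO(p_1,q_1)$-type spaces (e.g. via theta-correspondence or cohomological induction $A_{\mathfrak q}(\lambda)$), checking the admissibility/non-vanishing conditions translate exactly into the parity and size constraints. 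A parity count on $\operatorname{rank} G$ versus $\operatorname{rank} K$ and on $\operatorname{rank} K/(H\cap K)$ governs which cases survive: when $p,q$ both even every relevant rank is "full" and discrete series always exist; the odd cases impose the arithmetic conditions $2(p_1+q_1)\le p+q$ (resp.\ $\ge p+q+2$).

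The main obstacle will be the positive direction in the non-symmetric cases: Theorem~\ref{thm:SqintRho} and Corollary~\ref{cor:DS} give only non-existence, so producing actual discrete series requires an independent construction, and the bookkeeping of parities in cases (2) and (3) is delicate because the two halves $SO(p_1,q_1)$ and its "complement" behave asymmetrically when $p$ or $q$ is odd. I expect the cleanest route is to first reduce to $r=2$ with $p_2=p-p_1$, $q_2=q-q_1$ (a symmetric space) by a branching/transitivity argument showing $\textup{Disc}$ is non-decreasing as $H$ shrinks within this family --- which itself needs care --- and then read off the answer from the symmetric-space rank criterion, treating the genuinely non-symmetric overhang ($p>p_1+p_2$ or $q>q_1+q_2$) as an extra "spectator" factor that, by a Mackey-type argument, does not obstruct square-integrability of the relevant $(\mathfrak g,K)$-modules. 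Verifying that this overhang interacts correctly with the parity conditions --- in particular that it can toggle case (2) into the "always nonempty" subcase via the $q_1\ne 0$ clause --- is the subtle point that will require the most care.
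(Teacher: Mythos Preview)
Your plan for the ``only if'' direction via Corollary~\ref{cor:DS} matches the paper in case~(3), but it has a genuine gap in case~(2). When $p$ is even and $q$ is odd one has $\operatorname{rank} G=\operatorname{rank} K$, so $\textup{Disc}(SO(p,q))\neq\emptyset$; hence Corollary~\ref{cor:DS} yields only $\textup{Disc}(G/H)\subset\textup{Disc}(G)$ and does not by itself force $\textup{Disc}(G/H)=\emptyset$. The paper closes this gap with an additional $K$-type computation: in the complementary range $q_1=0$, $2p_1\ge p+q+1$ it shows that no Harish--Chandra discrete series of $SO(p,q)$ has a nonzero $SO(p_1)$-fixed vector, so none can occur in $L^2(G/H)$. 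Your proposal does not mention this step, and without it the argument in case~(2) is incomplete.

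For the ``if'' direction the paper takes a quite different and much shorter route than your constructive scheme. It invokes the orbit-method criterion of Harris--Oshima (stated in the paper as Theorem~\ref{thm:exDS}): $\textup{Disc}(G/H)\neq\emptyset$ whenever $\mathfrak{h}^{\perp}\cap\mathfrak{g}^{*}_{\textup{ell}}$ contains a nonempty open subset of $\mathfrak{h}^{\perp}$, and then checks this linear-algebraic condition directly under the parity and size hypotheses of (1)--(3). Your proposed reduction to the symmetric case via ``$\textup{Disc}$ is non-decreasing as $H$ shrinks'' is not a general fact---passing from $G/H$ to the larger space $G/H'$ changes the $L^2$ decay condition, so a discrete series for $G/H$ need not remain discrete for $G/H'$---and justifying it in this family would be at least as hard as the theorem itself. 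The paper bypasses all of the induction, $A_{\mathfrak q}(\lambda)$, and theta machinery with the single citation of Theorem~\ref{thm:exDS}.
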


The existence of discrete series,
 namely, `if' part of this theorem is a consequence of a result in \cite{HO}.
For the statement of this result, we introduce some notation.
Let $\mathfrak{g}^*$ denote the dual space of $\mathfrak{g}$
 and let $\mathfrak{h}^{\perp}:=\{\lambda\in\mathfrak{g}^*\mid \lambda|_{\mathfrak{h}}=0\}$.
An element $\lambda\in \mathfrak{g}^*$ is said to be elliptic if
 there exists a Cartan involution $\theta$ of $\mathfrak{g}$ such that
 $\lambda\in (\mathfrak{g}^*)^{\sigma}$.
Write $\mathfrak{g}^*_{\textup{ell}}$ for the set of all elliptic elements
 in $\mathfrak{g}^*$.

\begin{theorem}[{\cite[Theorem 1.6]{HO}}]\label{thm:exDS}
Let $G$ be a real reductive group and $H$ a reductive subgroup.
Then $\textup{Disc}(G/H)\neq \emptyset$ if $\mathfrak{h}^{\perp}\cap \mathfrak{g}^*_{\textup{ell}}$
 contains a non-empty open subset of $\mathfrak{h}^{\perp}$.
\end{theorem}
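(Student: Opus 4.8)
\emph{Strategy.} The plan is to produce, from the hypothesis, a single irreducible unitary subrepresentation of $L^2(G/H)$; by the definition of $\textup{Disc}(G/H)$ that suffices, and this is exactly what \cite{HO} achieves. Both $\mathfrak{h}^\perp$ and $\mathfrak{g}^*_{\textup{ell}}$ are cones (ellipticity is preserved under positive scaling), so a nonempty $\mathfrak{h}^\perp$-open subset of $\mathfrak{h}^\perp\cap\mathfrak{g}^*_{\textup{ell}}$ contains a nonempty $\mathfrak{h}^\perp$-open cone $\Gamma$. Since $\Gamma$ is open, a rescaling argument (starting from a rational element of $\Gamma$ and using that $\Gamma$ is a cone) produces $\lambda\in\Gamma$ that is $G$-integral and dominant enough for the orbit method below. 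Then $\mathcal{O}_\lambda:=G\cdot\lambda$ is a closed elliptic coadjoint orbit, and, fixing a Cartan involution $\theta$ with $\theta\lambda=\lambda$, the positivity of $\lambda$ on roots determines a $\theta$-stable parabolic $\mathfrak{q}_\lambda=\mathfrak{l}_\lambda\oplus\mathfrak{u}_\lambda$ with $\mathfrak{l}_\lambda=\operatorname{Lie}(G_\lambda)$.

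\emph{Constructing and embedding the representation.} By the orbit method for elliptic orbits (Vogan--Zuckerman cohomological induction, after Harish-Chandra and Schmid), the integrality and dominance of $\lambda$ make $A_{\mathfrak{q}_\lambda}(\lambda)$ a nonzero irreducible tempered unitary representation $\pi_\lambda$ of $G$, with infinitesimal character a $\rho$-shift of $\lambda$, and a discrete series of $G$ precisely when $G_\lambda$ is compact modulo the center. The crux is then to embed $\pi_\lambda$ into $L^2(G/H)$: one constructs an $H$-invariant distribution vector $\eta\in(\pi_\lambda^{-\infty})^H$ --- heuristically $\eta=\int_H\pi_\lambda(h)\eta_0\,dh$ for a suitable functional $\eta_0$ of $\pi_\lambda$ --- and checks that $v\mapsto\bigl(gH\mapsto\langle\pi_\lambda(g)^{-1}v,\eta\rangle\bigr)$ is a nonzero $G$-intertwining map from $\pi_\lambda$ into $L^2(G/H)$; by Schur's lemma it is then a scalar multiple of an isometric embedding, whence $\pi_\lambda\in\textup{Disc}(G/H)$. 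The two features of $\lambda$ play complementary roles here: the condition $\lambda\in\mathfrak{h}^\perp$ --- equivalently, that $\mathcal{O}_\lambda$ restricts to $0$ on $\mathfrak{h}$ --- forces the leading exponents of $\pi_\lambda$ along the split directions of $\mathfrak{h}$ to be as small as possible, which is what makes the $K$-finite matrix coefficients of $\pi_\lambda$ integrable over $H$ and the average defining $\eta$ convergent; whereas $\lambda$ lying in the \emph{open} cone $\Gamma$, rather than on a single ray of elliptic elements, supplies a strict margin of transverse decay inside the moment-map image $G\cdot\mathfrak{h}^\perp$ of $T^*(G/H)$, which is exactly what places the resulting functions in $L^2(G/H)$ and not merely in $\bigcap_{\varepsilon>0}L^{2+\varepsilon}(G/H)$ (cf.\ Proposition~\ref{prop:temp}).

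\emph{Main obstacle.} The hard part is precisely this last step: building $\eta$ with the sharp convergence, and proving that the resulting matrix coefficients are nonzero and genuinely square-integrable over $G/H$. It amounts to upgrading the soft geometric assertion ``$\mathcal{O}_\lambda$ lies, together with an $\mathfrak{h}^\perp$-open cone of nearby elliptic orbits, in the moment-map image of $T^*(G/H)$'' to the quantitative statement ``$\pi_\lambda$ occurs as a discrete summand of $L^2(G/H)$,'' and it rests on sharp asymptotics of tempered matrix coefficients along $H$-orbits --- the genuinely analytic input, for which I would follow \cite{HO}. It is also at this step that the open form of the hypothesis, rather than the bare nonemptiness of $\mathfrak{h}^\perp\cap\mathfrak{g}^*_{\textup{ell}}$, becomes indispensable.
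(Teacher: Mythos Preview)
The paper does not prove this theorem at all: it is quoted verbatim as \cite[Theorem~1.6]{HO} and used as a black box in the proof of Theorem~\ref{thm:SOpqDS}. There is therefore no ``paper's own proof'' to compare your proposal against.

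As to your sketch on its own terms: the broad shape --- pick a sufficiently regular integral elliptic $\lambda$ in the given open cone, attach to it a derived-functor module $A_{\mathfrak{q}_\lambda}(\lambda)$, and then produce an $H$-invariant distribution vector by averaging matrix coefficients over $H$ --- is a reasonable heuristic, but it is not the argument of \cite{HO}. That paper (as its title indicates) works through the \emph{asymptotic support of the Plancherel measure} of $G/H$: one shows that the moment-map image $\overline{G\cdot\mathfrak{h}^\perp}$ governs which coadjoint orbits can contribute to the Plancherel decomposition, and the openness hypothesis forces a positive-measure family of elliptic orbits to occur, hence discrete spectrum. Your Step~3, by contrast, attempts a direct analytic construction of $\eta\in(\pi_\lambda^{-\infty})^H$ via $\int_H\pi_\lambda(h)\eta_0\,dh$; making this converge and be nonzero is exactly the Flensted-Jensen--type problem that is already delicate for symmetric pairs and does not obviously go through for an arbitrary reductive $H$ from the hypothesis alone. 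You correctly flag this as the ``hard part,'' but then defer it back to \cite{HO}; at that point your proposal is no longer an independent argument but a restatement of the citation. So: nothing to compare here, and if you want a self-contained proof you should look at the Plancherel-measure/moment-map argument in \cite{HO} rather than the distribution-vector route.
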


\begin{proof}[Proof of Theorem~\ref{thm:SOpqDS}]
For all the cases (1) -- (3), the given conditions on $p,q,p_1,q_1$ imply that
 $\mathfrak{h}^{\perp}\cap \mathfrak{g}^*_{\textup{ell}}$
 contains a non-empty open subset of $\mathfrak{h}^{\perp}$.
Then Theorem~\ref{thm:exDS} shows the `if' part of Theorem~\ref{thm:SOpqDS}.

Let us proof the `only if' part.

Suppose first that $p$ and $q$ are odd.
Moreover, suppose that $p_1q_1=0$ or $2(p_1+q_1)\leq p+q+1$. 
Then $L^2(G/H)$ is square integrable by  the assertion of Example~\ref{ex:SOpq}.
Since $\textup{Disc}(SO(p,q))=\emptyset$,
 Corollary~\ref{cor:DS} gives $\textup{Disc}(G/H)=\emptyset$.

Suppose next that $p$ is even and $q$ is odd.
Moreover, suppose that $q_1=0$ and $2p_1\geq p+q+1$. 
Then again by the assertion of Example~\ref{ex:SOpq}, $L^2(G/H)$ is square integrable.
Hence $\textup{Disc}(G/H)\subset \textup{Disc}(G)$ by Corollary~\ref{cor:DS}.
Let $\pi\in \textup{Disc}(G)$, that is, the Harish-Chandra's discrete series of $SO(p,q)$.
By calculating the possible $K$-types of $\pi\in \textup{Disc}(G)$,
 it turns out that $\pi$ does not have a nonzero $SO(p_1)$-fixed vector.
Therefore, $\pi$ cannot be realized on $G/H$.
As a consequence, $\textup{Disc}(G/H)=\emptyset$ in this case.
\end{proof}

\noindent
{\bf Acknowledgments}

Y.Oshima was supported by JSPS KAKENHI Grant Number JP24K06706.

\end{document}